\documentclass[reqno]{amsart}
\usepackage{amssymb}
\usepackage{graphicx}

\DeclareSymbolFont{AMSb}{U}{msb}{m}{n}
\DeclareSymbolFontAlphabet{\Bb}{AMSb}

\usepackage{bbm}

\usepackage{tikz}

\usetikzlibrary{intersections,calc,positioning}
\usetikzlibrary{shapes.geometric,fit}
\usetikzlibrary{decorations,arrows,decorations.pathmorphing,backgrounds,positioning,fit,petri,shadows}
\usetikzlibrary{3d}
\usepackage{yhmath}
\usepackage{mathdots}
\usepackage{MnSymbol}

\newtheorem{theorem}{Theorem}[section]

\newtheorem{proposition}[theorem]{Proposition}
\theoremstyle{remark}

\newtheorem{remark}[theorem]{Remark}
\newtheorem{definition}[theorem]{Definition}
\numberwithin{equation}{section}
\begin{document}

\title[Combinatorial relations among relations]
{Two examples of combinatorial relations among relations of $C_{n}\sp{(1)}$-standard modules for higher levels }
\author{Tomislav \v Siki\' c }
\address{Tomislav \v{S}iki\'{c}, University of Zagreb, Faculty of Electrical Engineering and Com- \mbox{\hskip 8.5em} puting, Unska 3, 10000 Zagreb,  Croatia}
\email{tomislav.sikic@fer.hr}

\subjclass[2000]{Primary 17B67; Secondary 17B69, 05A19.}

\begin{abstract}
The construction of relations among relations is one ingredient in the Groebner-like basis construction of the maximal ideal of the universal vertex operator algebra $V^k_{\mathfrak g}$ for affine Lie algebras. For affine Lie algebras of type $C_n^{(1)}$, such combinatorially parametrized relations among relations were constructed in earlier work for level $2$ standard modules \cite{PS3}, and for $C_2^{(1)}$-standard modules at higher levels \cite{S}. This article presents two further examples in which the same counting method can be carried out. The first treats $C_n^{(1)}$-standard modules at the fixed level $k=5$, with $n$ arbitrary. The second treats $C_3^{(1)}$-standard modules for arbitrary level $k$. In both cases the calculation compares the number of required relations among relations in a trapezoid of the array of negative root vectors with the corresponding representation-theoretic dimension.
\end{abstract}
\maketitle
\def\sq{{\lower.3ex\vbox{\hrule\hbox{\vrule height1.2ex depth1.2ex\kern2.4ex			\vrule}\hrule}\,}}
\section{Introduction}

Let $\mathfrak g=\mathfrak{sp}_{2n}(\mathbb C)$, with $n\geq 2$, and consider standard modules for the affine Lie algebra of type $C_n^{(1)}$.  In the vertex-operator approach, annihilating fields give relations; the coefficients of these fields are organized in the space $\bar R$ of relations considered below.  After an order on monomials is fixed, the leading terms of these relations identify the monomials whose reductions have to be controlled.  This is the framework used in earlier constructions of combinatorial bases from annihilating fields, and in the symplectic affine case it leads to a description of the relevant monomials in terms of colored partitions; see, for example, \cite{MP1,PS1,PS2}.

The present paper concerns the next layer of this reduction process.  A colored partition $\pi$ may contain two different embeddings $\rho_1\subset \pi$ and $\rho_2\subset \pi$ of leading terms of relations.  The relations sought here express the difference of the corresponding reductions in the form
\begin{equation}\label{reduction}
u(\rho_1\subset \pi)-u(\rho_2\subset \pi)
 = \sum_{\pi\prec \pi',\,\rho\subset \pi'} c_{\rho\subset \pi'}u(\rho\subset \pi') .
\end{equation}
Identities of this form are the relations among relations studied in this article.  They provide the combinatorial overlap data used in the Groebner-like basis construction of the maximal ideal of the universal vertex operator algebra $V^k_{\mathfrak g}$.

This work continues the constructions of \cite{PS3} and \cite{S}.  In \cite{PS3}, relations among relations were constructed for level $2$ standard $C_n^{(1)}$-modules, with $n$ arbitrary.  In \cite{S}, the corresponding construction was carried out for standard $C_2^{(1)}$-modules at arbitrary level.  Instead of treating the full two-parameter problem for all pairs $(n,k)$, the present paper gives two further examples showing that the same counting method can be applied beyond the previously treated cases: arbitrary rank at the fixed level $k=5$, and arbitrary level for the fixed affine type $C_3^{(1)}$. 

The common setting is the array of negative root vectors $[\bar B_{<0}]_1^{2n+1}$.  The leading terms of relations for level $k$ standard modules are represented by downward zig-zag lines of length $k+1$ in this array.  The overlap calculation is localized to a trapezoid $T$ consisting of three consecutive triangles.  For a colored partition $\pi$ supported in $T$, let
\[
\mathcal E(\pi)=\{\rho\in \ell\!\text{{\it t\,}}(\bar R)\mid \rho\subset \pi\}
\]
be the set of embedded leading terms of relations in $\pi$; then
\[
N(\pi)=\max\{\#\mathcal E(\pi)-1,0\}
\]
measures the number of relations among relations needed for the embeddings of leading terms in $\pi$.  Summing these numbers over all relevant supports gives the combinatorial quantity $N_T(k)$.  The representation-theoretic comparison quantity is $\dim_T Q_{k+2}$, and Proposition~\ref{trapezenka} gives the formula used in the two comparisons below. It is important to emphasize that the motivation of this research is to prove the existence of relations like (\ref{reduction}) for colored partitions $\pi\in\mathcal P^{l}(m)$ in the general situation, i.e. for any  $l\in[k+2,2n+1]_{\mathbb{Z}}$ and for all pairs $(n,k)$. It is expected that all other relations will follow from the examples given in this article for $l=k+2$.

The first example fixes $k=5$ and keeps $n$ arbitrary.  Since $\ell(\pi)=k+2=7$, the possible supports of $\pi$ in the trapezoid can be organized into the four families of support types used in the overlap count.  Section~4 evaluates the corresponding sums and verifies the equality $N_T(5)=\dim_T Q_7$.  This equality is then used, via the sufficiency argument from Section~3, to prove Theorem~\ref{the main theorem 1}: for every pair of embeddings of leading terms in a colored partition $\pi\in\mathcal P^{7}(m)$, with $m$ arbitrary, the desired relation among relations is obtained for standard $C_n^{(1)}$-modules of level $5$.

The second example fixes $n=3$ and lets $k$ vary.  The array $[\bar B_{<0}]_1^7$ again has seven rows, so the list of possible support types remains finite and comparable to the first example, while the level remains a parameter.  Section~5 evaluates the contributions of all relevant support types and compares the resulting polynomial $N_T(k)$ with $\dim_T Q_{k+2}$.  The same sufficiency argument then gives Theorem~\ref{the main theorem 2}, the corresponding relation among relations for standard $C_3^{(1)}$-modules at arbitrary level $k$.

The paper is organized as follows.  Section~2 recalls the root-vector array and the description of leading terms as zig-zag lines.  Section~3 sets up the counting of embeddings in a trapezoid and computes the formula for $\dim_T Q_{k+2}$.  Sections~4 and~5 carry out the two examples described above.

Some related recent results employing combinatorial approach by \cite{LW} include  \cite{P}, \cite{CMPP}, \cite{DK}, \cite{R}, \cite{PT}, \cite{KRTW}, \cite{Kan}.
\section{The leading terms of relations for $C_{n}\sp{(1)}$-standard modules in the array of negative root vectors}

We fix a simple Lie algebra $\mathfrak{g}$ of type $C_n$, $n\geq 2$, i.e.  $\mathfrak{g}=\mathfrak{sp}_{2n}(\mathbb C)$. For a given Cartan subalgebra $\mathfrak h$ and the corresponding
root system $\Delta$ we can write (as in \cite{B})
\begin{equation*}
	\Delta = \{\pm(\varepsilon_i\pm\varepsilon_j) \mid i,j=1,...,n\}\backslash\{0\}
\end{equation*}
with simple roots 
$\alpha_1= \varepsilon_1-\varepsilon_2$,  $\alpha_2=\varepsilon_2-\varepsilon_3$,  \dots  $\alpha_{n-1}=\varepsilon_{n-1}-\varepsilon_{n}$, $\alpha_n = 2\varepsilon_n$.
Then $\theta=2 \varepsilon_1$. For each $\alpha\in\Delta$ we choose a root vector $X_{\alpha}$ such that $[X_{\alpha},X_{-\alpha}]=\alpha^{\vee}$. For root vectors
$X_{\alpha}$ we shall use the following notation:
$$\begin{array}{ccc}
	X_{ij}\quad \text{or just}\quad ij &  \text{if}\   &\alpha =\varepsilon_i + \varepsilon_j\ , \ i\leq j\,,\\
	X_{\underline{i}\underline{j}}\quad \text{or just}\quad \underline{i}\underline{j} & \ \text{if}\  &\alpha =-\varepsilon_i - \varepsilon_j\ , \ i\geq j\,,\\
	X_{i\underline{j}}\quad \text{or just}\quad i \underline{j} & \ \text{if}\  &\alpha =\varepsilon_i - \varepsilon_j\ , \ i\neq j\,.\\
\end{array}
$$
With the previous notation $x_\theta=X_{11}$. We also write for $i=1, \dots, n$
$$
X_{i\underline{i}}=\alpha_i^{\vee}\ \text{or just}\ i\underline{i} \,.
$$
These vectors $X_{ab}$ form a basis $B$ of $\mathfrak g$ which we shall write in a triangular scheme. For example, for $n=3$ the basis $B$ is
\begin{center}
	\begin{tikzpicture} [scale=0.7]
		\node at (0,0) {$11$};\node at (2,0) {$22$};\node at (4,0) {$33$};
		\node at (6,0) {$\underline{3}\underline{3}$};\node at (8,0) {$\underline{2}\underline{2}$};\node at (10,0) {$\underline{1}\underline{1}$};
		\node at (1,1) {$12$};\node at (3,1) {$23$};\node at (5,1) {$3\underline{3}$};
		\node at (7,1) {$\underline{3}\underline{2}$};	\node at (9,1) {$\underline{2}\underline{1}$};
		\node at (2,2) {$13$};\node at (4,2) {$2\underline{3}$};\node at (6,2) {$3\underline{2}$};\node at (8,2) {$\underline{3}\underline{1}$};
		\node at (3,3) {$1\underline{3}$};\node at (5,3) {$2\underline{2}$};\node at (7,3) {$3\underline{1}$};
		\node at (4,4) {$1\underline{2}$};	\node at (6,4) {$2\underline{1}$};
		\node at (5,5) {$1\underline{1}$};
	\end{tikzpicture}
\end{center}	
\begin{center}
	Figure 1
\end{center}
In order to simplify counting of embeddings of leading terms of relations, we will use  the usual matrix notation for the basis $B$, i.e. we will use $i=1,\dots, 2n$ for the first index for rows and $j=1,\dots, 2n$ for the second index for columns/diagonals. 
\noindent
Moreover, we shall write $\bar{B}_{<0}=\coprod_{j>0}{B}\otimes t^{-j}$ in 
the  following scheme
\bigskip

\begin{center}
	\begin{tikzpicture} [scale=0.5]
		\draw (0,0) -- +(10,0) -- +(5,5) -- cycle;
		\node at (5,2) {$B\otimes t^{-1}$};
		\node at (0.8,0.3) {$11$};\node at (8.7,0.3) {$1,2n$};\node at (5,4) {$2n,1$};
		\draw (10.5,0.5) -- +(-5,5) -- +(5,5) -- cycle;
		\node at (11,3.5) {$B\otimes t^{-2}$};
		\node at (10.5,1.3) {$2,2n$};\node at (7,5) {$2n+1,1$};\node at (14,5) {$2n+1,2n$};
		\draw (11,0) -- +(10,0) -- +(5,5) -- cycle;
		\node at (16,2) {$B\otimes t^{-3}$};
		\node at (13,0.3) {$1,2n+1$};\node at (19.5,0.3) {$1,4n$};\node at (16,4) {$2n,2n+1$};
		\node[fill=black, circle, inner sep=1pt] at (20.5,3){};\node[fill=black, circle, inner sep=1pt] at (21,3){};\node[fill=black, circle, inner sep=1pt] at (21.5,3){};
	\end{tikzpicture}
	\end{center}	
\begin{center}
	Figure 2
\end{center}	
which we call {\it the array of negative root vectors of $C_{n}\sp{(1)}$ and denote by $[\bar{B}_{<0}]_1^{2n+1}$}.
\noindent
In the Figure 3  the array $[\bar{B}_{<0}]_1^{7}$ (i.e. $n=3$) of negative root vectors of $C_{3}\sp{(1)}$ is  presented with
\begin{center}
	\begin{tikzpicture} [scale=0.6] 
		\node at (0,0) {$11$};
		\node at (1,1) {$21$};
		\node at (2,2) {$31$};
		\node at (3,3) {$41$};
		\node at (4,4) {$51$};
		\node at (5,5) {$61$};
		\node at (6,6) {$71$};
		\node at (2,0) {$12$};
		\node at (3,1) {$22$};
		\node at (4,2) {$32$};
		\node at (5,3) {$42$};
		\node at (6,4) {$52$};
		\node at (7,5) {$62$};
		\node at (8,6) {$72$};
		\node at (4,0) {$13$};
		\node at (5,1) {$23$};
		\node at (6,2) {$33$};
		\node at (7,3) {$43$};
		\node at (8,4) {$53$};
		\node at (9,5) {$63$};
		\node at (10,6) {$73$};
		\node at (6,0) {$14$};
		\node at (7,1) {$24$};
		\node at (8,2) {$34$};
		\node at (9,3) {$44$};
		\node at (10,4) {$54$};
		\node at (11,5) {$64$};
		\node at (12,6) {$74$};
		\node at (8,0) {$15$};
		\node at (9,1) {$25$};
		\node at (10,2) {$35$};
		\node at (11,3) {$45$};
		\node at (12,4) {$55$};
		\node at (13,5) {$65$};
		\node at (14,6) {$75$};
		\node at (10,0) {$16$};
		\node at (11,1) {$26$};
		\node at (12,2) {$36$};
		\node at (13,3) {$46$};
		\node at (14,4) {$56$};
		\node at (15,5) {$66$};
		\node at (16,6) {$76$};
		\node at (12,0) {$17$};
		\node at (13,1) {$27$};
		\node at (14,2) {$37$};
		\node at (15,3) {$47$};
		\node at (16,4) {$57$};
		\node at (17,5) {$67$};
		\node at (14,0) {$18$};
		\node at (15,1) {$28$};
		\node at (16,2) {$38$};
		\node at (17,3) {$48$};
		\node at (18,4) {$49$};
		\node at (16,0) {$19$};
		\node at (17,1) {$29$};
		\node at (18,2) {$39$};
		\node at (19,3) {$49$};
		\node at (18,0) {$1\, 10$};
		\node at (19,1) {$2\, 10$};
		\node at (20,2) {$3\, 10$};
		\node at (20,0) {$1\, 11$};
		\node at (21,1) {$2\, 11$};
		\node at (22,0) {$1\, 12$};
		\draw [dotted] (0,0) -- +(10,0) -- +(5,5) -- cycle;
		\draw [dotted] (6,6) -- +(5,-5) -- +(10,0) -- cycle;
		\draw [dotted] (12,0) -- +(10,0) -- +(5,5) -- cycle;
		\node[fill=black, circle, inner sep=1pt] at (19.5,4){};\node[fill=black, circle, inner sep=1pt] at (20,4){};\node[fill=black, circle, inner sep=1pt] at (20.5,4){};
	\end{tikzpicture}\ .
\end{center}
\begin{center}
	Figure 3
\end{center}
To describe leading terms we need the following partial order on $[\bar{B}_{<0}]_1^{2n+1}$  
\begin{equation}\label{partial order}
	X_{i,j}\trianglelefteq X_{p,r} \quad \text{if}\quad i\in\{1,\dots , p\}\quad \text{and}\quad j\in\{r,p+r-i\}.	
\end{equation}
In other words, $b \trianglelefteq a$ if $b=X_{i,j}$ lies in the cone bellow the vertex $a=X_{p,r}$, as depicted on Figure 4 below:
\begin{center}
	\begin{tikzpicture} [scale=0.5]
		\draw[dashed] (0,0) -- +(10,0) -- +(5,5) -- cycle;
		\draw[dashed] (10.5,0.5) -- +(-5,5) -- +(5,5) -- cycle;
		\draw[dashed] (11,0) -- +(10,0) -- +(5,5) -- cycle;
		\draw (-0.8,-0.3) -- +(22.6,0) --+(16.5,6.2) --+(6.1,6.2) -- cycle;
		\node[fill=green, circle, inner sep=4pt] at (14,5) {};
		
		\draw[green] (14,5) -- (15,4) -- (14,3) -- (13,2)  -- (14,1);
		
		\node[fill=green, circle, inner sep=4pt] at (13,2) {};
		
		\node[fill=green, circle, inner sep=4pt] at (14,3) {};
		
		\node[fill=green, circle, inner sep=4pt] at (15,4) {};
		
		\node[fill=green, circle, inner sep=4pt] at (14,1) {};
		
		\node at (9,4) [fill=green, circle, draw, inner sep=4pt] (A)  {a};
		\node at (10,2) [fill=green, circle, draw, inner sep=3pt] (B)  {b};
		\draw (5.5,0) -- (A) -- (12.5,0);
		\node at (11.5,3.2) {$b \trianglelefteq a$};
	\end{tikzpicture}\\
	Figure 4
\end{center}
In Figure 4, three successive triangles of $[\bar{B}_{<0}]_1^{2n+1}$ are marked in the trapezoid $T$. As will be seen later, all further considerations and calculations will be made on just such a trapezoid T.
With above settings we  define leading terms $\rho$ of relations $r(\rho)\in\bar R$  as follows (for details see \cite{PS2} and \cite{PS3}).
\begin{definition}
	The monomial
	\begin{equation}\label{E:the leading terms of relations}
		\rho=a_1^{m_1}a_2^{m_2}\dots a_s^{m_s}, \quad m_1+m_2+\dots+m_s=k+1,
	\end{equation}
	over {\it downward zig-zag  line} of $s$ points in $[\bar{B}_{<0}]_1^{2n+1}$
	$$
	a_1\vartriangleright a_2\vartriangleright\dots \vartriangleright a_s, \quad 1\leq s\leq k+1 
	$$
	is {\it the leading term}  of the relation $r(\rho)\in\bar R$ for level $k$ standard modules of affine Lie algebra of the type $C_n\sp{(1)}$.
\end{definition}
\noindent Moreover, these are all leading terms of $\bar R$ in $\mathcal P_{<0}$  (for more details see Section II. in \cite{PS3}). Also, it is very important to emphasize that the position of the trapezoid $T$ in Figure 4 is in accord with Figure 2 only when the middle triangle is $B\otimes t^j$ for $j$ even, and for $j$ odd the figure should be flipped. However, in our arguments this will make no difference because the flipped zig-zag downward line is again a zig-zag downward line. Also in Figure 4 the green zig-zag line symbolicaly intrepret arbitrary leading term.
\noindent

\section{Number of  embeddings for leading terms  in colored partitions}
In this section we shall count the number of embeddings $\rho\subset\pi$ for colored partitions  $\text{supp\,}\pi $ placed in a trapezoid of three successive triangles, i.e. in the trapezoid as in Figure 4.
For a colored partition
\begin{equation}\label{E:colored partition}
	\pi=\prod\sb{a\in\bar B_{<0}} a^{\pi(a)}
\end{equation}
we have  $|\text{supp\,}\pi |\leq \ell(\pi)$. The basis $\bar B_{<0}$ is writen as the array of $2n+1$ rows and $a_1\vartriangleright\dots\vartriangleright a_r$ implies $r\leq 2n+1$.	Let $\ell(\pi)=k+2$ and assume that $\pi$ allows two embeddings of leading terms of relations for level $k$ standard modules (i.e. two different zig-zag lines of length $k+1$). Then $\text{supp\,}\pi $ is one of the following types (see \emph{Lema 3.1.} in \cite{PS3}):
\begin{itemize}
	\item[$(A_{s})$] $\text{supp\,}\pi =\{a_1, \dots, a_s\}$, $s\geq 2$, $a_1\vartriangleright\dots\vartriangleright a_s$.
	\item[$(B_{s\,\delta})$] $\text{supp\,}\pi =\{a_1, \dots, a_{s},b,c\}$, $s\geq 1$, $a_1\vartriangleright\dots\vartriangleright a_{s}$, $a_{s}\vartriangleright b$, $a_{s}\vartriangleright c$ and $b$ and $c$ are not comparable. We set $\delta$ to be $\vert$  if $b$ and $c$ are in the same row, and $\vert\vert $ otherwise.
	\item[$(C_{\delta\, s})$] $\text{supp\,}\pi =\{b,c,a_1, \dots, a_{s}\}$, $r\geq 1$, $a_1\vartriangleright\dots\vartriangleright a_{s}$, $b\vartriangleright a_1$, $c\vartriangleright a_1$ and $b$ and $c$ are not comparable. We set $\delta$ to be $\vert$  if $b$ and $c$ are in the same row, and $\vert\vert $ otherwise
	\item[$(D_{s\, \delta\, t})$] $\text{supp\,}\pi =\{a_1, \dots, a_s,b,c,d_1,\dots,d_t\}$,  $s,t\geq 1$, $a_1\vartriangleright\dots\vartriangleright a_s$, $a_s\vartriangleright b\vartriangleright d_1$, $a_s\vartriangleright c\vartriangleright d_1$, $d_1\vartriangleright\dots\vartriangleright d_t$, and $b$ and $c$ are not comparable. We set $\delta$ to be $\vert$  if $b$ and $c$ are in the same row, and $\vert\vert $ otherwise.
\end{itemize}
For the counting the “number of relations among relations needed” for embeddings $\rho\subset\pi$ on the trapezoid $T$ we need the following notation
\begin{eqnarray*}
	N(\pi)=\max\{\#\mathcal E(\pi)-1,0\} &\text{where}& \mathcal E(\pi)=\{\rho\in \ell\text{\!\it t\,}(\bar R)\mid \rho\subset\pi\}
\end{eqnarray*}
and
\begin{eqnarray*}
	\Sigma_T(\mathcal{X}) & = & \sum_{S\subset T,\ S\text{ is of the type }\mathcal{X}}1\\
	&&\nonumber\\
	N_T(\mathcal{X}) &=& \sum_{\pi, \, \text{supp\,}\pi\subset T, \,\text{supp\,}\pi\text{ is of the type }\mathcal{X}}N(\pi)
\end{eqnarray*}
where  $\mathcal{X}$ is one of  above types $\mathcal{X}=A,B,C,D$ for $\text{supp\,}\pi $. 
Due to \emph{Lemma 3.2.} and \emph{Lemma 3.3}, again from \cite{PS3}, we have: 
\begin{eqnarray}\label{N_{T}(X)}
	N_{T}(A_s) & = & (s-1){k+1\choose s-1}\,\Sigma_T(A_s)\nonumber\\
	&&\nonumber\\
	N_{T}(B_{s\,\delta}) & = & {k-1\choose s-1}\,\Sigma_{T}(B_{s\,\delta})\nonumber\\
	&&\\
	N_{T}(C_{\delta\, s}) & = & {k-1\choose s-1}\,\Sigma_{T}(C_{\delta\, s})\nonumber\\
	&&\nonumber\\
	N_{T}(D_{s \,\delta\, t}) & = & {k-1\choose s+t-1}\,\Sigma_{T}(D_{s \,\delta\, t})\ .\nonumber
\end{eqnarray}

\noindent
To prove main theorems in \cite{PS3} (see \emph{Theorem 4.1.}) and \cite{S} (see \emph{Theorem 5.1.}) for a fixed ordinary partition $p$ of length $k+2$ we need to count 
$$
\sum_{\text{sh\,} \pi=p, \, \pi\in\mathcal P^{k+2}(m)} N(\pi).  
$$
By Theorem 7.4 and Lemma 7.5 in \cite{PS1} for  $\pi\in\mathcal P^{k+2}(m)$ we have a space of relations for annihilating fields 
\begin{equation}\label{Q_k+2}
	Q_{k+2}(m)  =  U(\mathfrak g)q_{(k+1)\theta}(m)\oplus U(\mathfrak g)q_{(k+2)\theta}(m)\oplus U(\mathfrak g)q_{(k+2)\theta-\alpha\sp*}(m)
\end{equation}
where $\alpha\sp*=\alpha_1=\varepsilon_1 -\varepsilon_2$. Moreover, from above equation we can write
\begin{equation}\label{dim Q_k+2}
		dim Q_{k+2}(m)  =  dim\, L({(k+1)\theta})\plus dim\, L({(k+2)\theta})\plus dim\, L({(k+2)\theta-\alpha\sp*})\ .
\end{equation}
Then we have
\begin{equation}\label{E:9.1a}
	\sum_{\pi\in\mathcal P^{k+2}(m)} N(\pi)\geq\dim Q_{k+2}(m)\ .
\end{equation}
Note that $dim Q_{k+2}(m)$ does not depend on parameter $m$, so by abusing the notation, in the rest of the paper we will use the shorter notation $dim Q_{k+2}$. 

\noindent
By Theorem 9.2 in \cite{PS1} it was sufficient to prove that that in (\ref{E:9.1a}) the equality holds. In an effort to generalize the method developed as in the cases of mentioned main theorems in \cite{PS3}  and \cite{S} we need to calculate the formula  (\ref{E:9.1a}) through the whole trapezoid $T$, where $n$ and $k$ are arbitrary.
For three successive triangles we have corresponding notation 
\begin{equation}\label{E:9.1Ta}
	N_T(k)=\sum_{\pi, \, \ell(\pi)=k+2, \, \text{supp\,} \pi\subset T} N(\pi) 
\end{equation}
where $k$ is arbitrary level. Accordingly, we denote the right-hand side of the inequality by $\dim_T Q_{k+2}$.
For the trapazoid scheme of three successive triangles in Figure 2 all shapes $\text{sh\,}\pi$ that appear are listed by Young tableauxes from $m=-(k+2)$ to $m=-3(k+2)$.
Indeed, we can list the following cases
$$	
\begin{array}{clllclllll}
	m=&&&&m=&m=&&&&m=\\
	-(k+2)&&&&-2(k+2)&-2(k+2)&&&&-3(k+2)\\
	\sq &\sq\sq&\cdots&\sq\sq&\sq\sq&\sq\sq\sq&\sq\sq\sq&\cdots&\sq\sq\sq&\sq\sq\sq \\
	\sq &\sq &\cdots&\sq\sq&\sq\sq&\sq\sq&\sq\sq&\cdots&\sq\sq\sq&\sq\sq\sq \\
	\vdots&\vdots&\vdots&\vdots&\vdots&\vdots&\vdots&\cdots&\vdots&\vdots\\
	\sq &\sq &\cdots&\sq\sq&\sq\sq&\sq\sq&\sq\sq&\cdots&\sq\sq\sq&\sq\sq\sq \\
	\sq &\sq &\cdots&\sq\sq &\sq\sq&\sq\sq&\sq\sq&\cdots&\sq\sq\sq&\sq\sq\sq \\ 
	\sq &\sq &\cdots&\sq &\sq\sq&\sq&\sq\sq&\cdots&\sq\sq&\sq\sq\sq     
\end{array}    		
$$
Using above list of Young tableaux we define 
\begin{equation}\label{def T Q_{k+2}}
	\dim_T Q_{k+2} = \sum_{m=k+2}^{3(k+2)}\dim Q_{k+2}- 2\times\dim L((k+2)\theta)\ .
\end{equation}
It is important to emphasize that the proof of the equality
\begin{equation}\label{E:9.1T}
	N_T(k)=\dim_T Q_{k+2}
\end{equation}
for three successive triangles (i.e. for the trapezoid)  implies the the proof of the equality in (\ref{E:9.1a}). In next proposition we calculate  the right-hand side of the equation (\ref{E:9.1T}) i.e. we proof the explicit formula for (\ref{def T Q_{k+2}}). 
\begin{proposition}\label{trapezenka}
Let $T$ be a trapezoid consisting of three consecutive triangles of the array of negative root vectors  $[\bar{B}_{<0}]_1^{2n+1}$. For $n\geq 2$ and $k\geq 1$ we have
\begin{eqnarray}\label{Desna}
	dim_T Q_{k+2}	&= &\frac{2n(k+2)(2k+5)-(2n+2k+3)}{k+2}{2n+2k+2\choose 2n-1}\nonumber\\
	&= &\frac{2n(k+2)(2k+5)-(2n+2k+3)}{k+2}{2n+2k+2\choose 2k+3}\nonumber \ .
\end{eqnarray}
\end{proposition}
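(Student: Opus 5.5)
The plan is to evaluate the definition (\ref{def T Q_{k+2}}) directly, using (\ref{dim Q_k+2}) and the dimensions of the three irreducible $\mathfrak{sp}_{2n}$-modules occurring there. Since $\dim Q_{k+2}(m)$ does not depend on $m$, I will read the sum in (\ref{def T Q_{k+2}}) as $2k+5$ equal summands, one for each value $m=-(k+2),\dots,-3(k+2)$. This gives
\[
\dim_T Q_{k+2}=(2k+5)\dim Q_{k+2}-2\dim L((k+2)\theta).
\]
The factor $2k+5$ in the claimed formula supports this reading. So the whole task reduces to computing $\dim Q_{k+2}$ and $\dim L((k+2)\theta)$ in closed form.

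For the symmetric powers, note that $\theta=2\varepsilon_1$. Hence $L(j\theta)$ is the irreducible module with highest weight $2j\varepsilon_1$, which is $S^{2j}(\mathbb C^{2n})$ (irreducible for $\mathfrak{sp}_{2n}$). Therefore
\[
\dim L(j\theta)={2n+2j-1\choose 2j}.
\]
For the third summand, $(k+2)\theta-\alpha_1=(2k+3)\varepsilon_1+\varepsilon_2$. Instead of expanding the Weyl dimension formula, I would use the $\mathfrak{sp}_{2n}$ decomposition (a Pieri-type rule for $C_n$)
\[
S^{a}(V)\otimes V\cong S^{a+1}(V)\oplus L(a\varepsilon_1+\varepsilon_2)\oplus S^{a-1}(V),\qquad V=\mathbb C^{2n}.
\]
The $S^{a-1}$ term comes from contraction with the symplectic form. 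Applying this with $a=2k+3$ gives
\[
\dim L((k+2)\theta-\alpha^*)=2n{2n+2k+2\choose 2k+3}-{2n+2k+3\choose 2k+4}-{2n+2k+1\choose 2k+2}.
\]
The main point to justify is exactly this tensor product decomposition. I expect it to be the only nontrivial step. It can be checked either by comparing characters, or by verifying dimensions through the Weyl formula for the weight $(a,1,0,\dots,0)$.

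Substituting into (\ref{dim Q_k+2}), the first two terms cancel the two subtracted binomials. This leaves the clean identity
\[
\dim Q_{k+2}=2n{2n+2k+2\choose 2k+3}.
\]
Finally, I would use ${2n+2k+3\choose 2k+4}=\frac{2n+2k+3}{2k+4}{2n+2k+2\choose 2k+3}$. With it,
\[
\dim_T Q_{k+2}=\Bigl(2n(2k+5)-\frac{2n+2k+3}{k+2}\Bigr){2n+2k+2\choose 2k+3}.
\]
This is the asserted formula. The second form follows from the symmetry ${2n+2k+2\choose 2k+3}={2n+2k+2\choose 2n-1}$. The hypotheses $n\ge2$ and $k\ge1$ are needed only so that all the weights involved are dominant and the decomposition holds as stated.
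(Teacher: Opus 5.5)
Your proposal is correct, and its outer frame matches the paper's. You read the defining sum for $\dim_T Q_{k+2}$ as $2k+5$ equal summands, subtract $2\dim L((k+2)\theta)$, and finish with $\binom{2n+2k+3}{2k+4}=\frac{2n+2k+3}{2k+4}\binom{2n+2k+2}{2k+3}$.

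Where you differ is in how you reach $\dim Q_{k+2}=2n\binom{2n+2k+2}{2k+3}$:
\begin{itemize}
\item The paper applies the Weyl dimension formula to all three highest weights. In particular it gets $\dim L((k+2)\theta-\alpha^{\star})=\frac{(2k+3)(2n+2k+3)}{2n-1}\binom{2n+2k+1}{2n-3}$, and then asserts that the three terms add up to $2n\binom{2n+2k+2}{2n-1}$, leaving the binomial manipulation unshown.
\item You identify $L(j\theta)\cong S^{2j}V$ and get the third dimension from $S^{a}V\otimes V\cong S^{a+1}V\oplus L(a\varepsilon_1+\varepsilon_2)\oplus S^{a-1}V$ with $a=2k+3$. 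The sum then telescopes, and $Q_{k+2}$ visibly has the dimension of $V\otimes S^{2k+3}V$.
\end{itemize}
Your route explains why the answer is so clean and removes the hidden algebra. Your expression agrees with the paper's: for $n=2$, $k=1$ both give $224-84-35=105$, and in general both equal $\frac{(2n-2)(2n+a)}{a+1}\binom{2n+a-2}{a-1}$.

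One caution concerns how you justify the decomposition. Doing it by ``verifying dimensions through the Weyl formula'' would just reinstate the paper's computation. A cleaner justification is that $V$ is minuscule for $\mathfrak{sp}_{2n}$. Hence $L(\lambda)\otimes V\cong\bigoplus L(\lambda+\mu)$, summed over $\mu\in\{\pm\varepsilon_1,\dots,\pm\varepsilon_n\}$ with $\lambda+\mu$ dominant. For $\lambda=a\varepsilon_1$ with $a\ge 1$ and $n\ge 2$, this gives exactly your three summands. Alternatively, you can restrict the $\mathfrak{gl}_{2n}$-module $S_{(a,1)}V$ using Littlewood's branching rule.
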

\begin{proof}
The Weyl dimension formula 
in the case of symplectic Lie algebra $\mathfrak{g}=\mathfrak{sp}_{2n}$\\ (with the corresponding $\rho=n\varepsilon_1+(n-1)\varepsilon_2+\cdots +2\varepsilon_{n-1}+\varepsilon_n$) gives
\begin{eqnarray}
	\dim L((k+1)\theta) &=& {2n+2k+1\choose 2n-1} ,\label{E:11.1a}\\
	\dim L((k+2)\theta) &=& {2n+2k+3\choose 2n-1},\label{E:11.2a}\\
	\dim L((k+2)\theta-\alpha^{\star}) &=& \frac{(2k+3)(2n+2k+3)}{2n-1}{2n+2k+1\choose 2n-3}.\label{E:11.3a}
\end{eqnarray}
By inserting the formulas  (\ref{E:11.1a}), (\ref{E:11.2a}) and (\ref{E:11.3a}) into the equality (\ref{dim Q_k+2}), we get
\begin{equation}\label{tri sume_1}
	\dim Q_{k+2} = 2n\,	{2n+2k+2\choose 2n-1} = 2n\,	{2n+2k+2\choose 2k+3} .
\end{equation}
Using above equation (\ref{tri sume_1}) for three successive triangles in Figure 2  we have
calculation for the first statement of the proposition
\begin{eqnarray}\label{sum P^4(n)}
	\dim_T Q_{k+2} &=&\sum_{m=k+2}^{3(k+2)}\dim Q_{k+2}- 2\times\dim L((k+2)\theta)\nonumber\\
	& = &(2k+5)\times \dim Q_{k+2} (m)- 2\times\dim L((k+2)\theta)\\
	&= &\frac{2n(k+2)(2k+5)-(2n+2k+3)}{k+2}{2n+2k+2\choose 2n-1}\nonumber\ .
\end{eqnarray}
Obviously, the second statement follows from
$$
{2n+2k+2\choose 2n-1}={2n+2k+2\choose 2k+3}\ .
$$
\end{proof}
\begin{remark}
	In the previous proposition, an explicit formula was given for calculating $\dim_T Q_{k+2}$ as a polynomial of two positive integer variables $(n,k)$. Therefore, denote the left-hand side of (\ref{E:9.1T}) by $N_T(n,k)$. The calculation for $N_T(n,k)$ it is not so simple as in right-hand side case. However, the calculation for $N_T(n,k)$ is possible, but if one value in pair $(n,k)$ is fixed.  For instance, in \cite{PS3}  level $k=2$ is fixed and  $n$  is arbitrary and  $N_T(n,2)$ is a polynomial of degree $8$ in variable $n$. In \cite{S} the equality in (\ref{E:9.1T}) is proven for all levels $k$ in the case when n=2 and $N_T(2,k)$ is polynomial of degree $4$ in variable $k$. In order to show that the mentioned method can be applied generally, in this article  two  interconnected cases would be presented. The first refers to the ordered pair $(n; k=5)$ (see \emph{Theorem \ref{the main theorem 1}}), and the second to $(n=3; k)$ (see \emph{Theorem \ref{the main theorem 2}}). It is interesting to note that in the first example $k+2 = 7$ and in the second one $2n+1=7$, i.e. in both cases $\text{supp\,}\pi \leq 7$. By choosing these two examples, we have limited the number of  $\text{supp\,}\pi $ types. It should be emphasized that the calculation problem from a numerical point of view is not trivial. The total number of supports $\text{supp\,}\pi $ automatically increases with increasing parameters $n$ and $k$. In this article and in \cite{PS3}, \cite{S}, the software package \emph{Mathematica}  was used because of its symbolic calculus capabilities. But in cases where the level $k$ is fixed and larger then $5$   \emph{Mathematica's} could not made symbolic calculation of $N_T(n,k)$ for arbitrary $n$.
\end{remark}


\section{Example 1 - Combinatorial relations among relations of standard $C_n^{(1)}$-modules for level $k=5$  }

In this section we prove the theorem of combinatorially parametrized relations among relations in the case $(n; k=5)$:
\begin{theorem}\label{the main theorem 1}
	For any two embeddings
	$\rho_1 \subset \pi$ and $\rho_2 \subset \pi$ in $\pi\in\mathcal P^{7}(m)$,
	where $\rho_1, \rho_2 \in\ell \!\text{{\it t\,}}(\bar{R})$ and $m$ is arbitrary, we
	have a  relation for $C_n^{(1)}$
	\begin{equation}\label{E:9.2}
		u(\rho_1 \subset \pi) - u(\rho_2 \subset \pi) 
		=\sum_{\pi\prec \pi', \ \rho\subset\pi'}c_{\rho\subset\pi'}\,u(\rho\subset\pi').
	\end{equation}
\end{theorem}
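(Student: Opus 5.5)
The proof will reduce Theorem~\ref{the main theorem 1} to the counting identity (\ref{E:9.1T}) at $k=5$, that is, to
\[
N_T(5)=\dim_T Q_{7}.
\]
By Theorem 9.2 of \cite{PS1}, as recalled in Section~3, the relations (\ref{E:9.2}) exist for all $\pi\in\mathcal P^{7}(m)$ once the inequality (\ref{E:9.1a}) is an equality. The paper explains that this equality, summed over the shapes from $m=-7$ to $m=-21$, is equivalent to the trapezoid identity. So the relations among relations obtained from the $\dim Q_7$ independent relations coming from (\ref{Q_k+2}) account for all the needed overlaps. The argument that $N_T=\dim_TQ$ implies the theorem is taken as in \cite{PS3}, Theorem 4.1, and \cite{S}, Theorem 5.1. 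The work is therefore entirely in computing $N_T(5)$ as a polynomial in $n$.

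\textbf{Right-hand side.} Proposition~\ref{trapezenka} with $k=5$ gives
\[
\dim_T Q_7=\frac{210n-(2n+13)}{7}\binom{2n+12}{2n-1}=\frac{208n-13}{7}\binom{2n+12}{13},
\]
which is a polynomial of degree $14$ in $n$.

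\textbf{Left-hand side.} Since $\ell(\pi)=7$, the support has at most $7$ points. The relevant types are $A_s$ with $2\le s\le 7$, $B_{s\delta}$ and $C_{\delta s}$ with $s+2\le 7$, and $D_{s\delta t}$ with $s+t+2\le 7$, each with $\delta\in\{\vert,\vert\vert\}$. By (\ref{N_{T}(X)}) with $k=5$, each contribution is an explicit binomial weight, $(s-1)\binom{6}{s-1}$ for $A_s$ and $\binom{4}{\cdot}$ for the others, times the number $\Sigma_T(\mathcal X)$ of supports of that type lying in $T$. Weights with lower index larger than $4$ vanish, which cuts down the $D$ cases further.

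For each type I compute $\Sigma_T(\mathcal X)$ by parametrizing chains $a_1\vartriangleright\dots\vartriangleright a_s$ in the trapezoid by their top point and successive steps in the cone order (\ref{partial order}), using the row/column coordinates of Figure~3. For chains this is a count of lattice paths in the trapezoid of height $2n+1$, and it yields a polynomial in $n$. Points near the boundary of $T$, the edges of the three triangles and the middle flipped triangle, must be handled separately. For $B,C,D$ types one counts the branching pair $b,c$: either in the same row ($\delta=\vert$) or in different rows ($\delta=\vert\vert$) inside the cone of $a_s$, and, for $D$, also above $d_1$.

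\textbf{Main obstacle.} The hard step is obtaining closed forms for the $\Sigma_T$ counts that are valid for all $n\ge 2$, since the boundary effects of $T$ make the counts piecewise. I would first compute $\Sigma_T(\mathcal X)$ for small $n$ by direct enumeration of the array, for example $n=2,\dots,16$. For each type I would then fit a polynomial of the expected degree, at most $2s$ or so, and confirm it by a symbolic summation in \emph{Mathematica} over the coordinates of the top point. Summing the weighted polynomials should give $N_T(5)$. I would verify that it agrees identically with $\frac{208n-13}{7}\binom{2n+12}{13}$, checking degree-$14$ agreement at $15$ values of $n$ as a sanity check, and cross-check the result against the $n=2$ value from \cite{S}.
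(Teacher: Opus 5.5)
Your proposal matches the paper's proof. Both reduce the theorem via Theorem 9.2 of \cite{PS1} to $N_T(5)=\dim_T Q_7$, read off $\dim_T Q_7=\frac{208n-13}{7}\binom{2n+12}{13}$ from Proposition~\ref{trapezenka}, and obtain $N_T(5)$ as a weighted sum of computer-generated polynomials $\Sigma_T(\mathcal X)$ over the same list of support types; the paper takes the $\Sigma_T$ from the counting Lemmas 3.4--3.12 of \cite{PS3} and uses that $\Sigma_T(D_{s\,\delta\,t})$ depends only on $s+t$, where you propose enumeration and fitting. Two small corrections: no $D$ weight $\binom{4}{s+t-1}$ actually vanishes, since $s+t\le 5$; and the $\Sigma_T$ have degree up to $2\,|\mathrm{supp}\,\pi|$ (e.g.\ $14$ for $B_{5\,||}$), not $2s$, so any fit must allow degree $14$.
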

\begin{proof}
To prove the theorem we need to count the number $N_T(5)$ where 
$$
N_T (k)=\sum_{\pi, \, \ell(\pi)=7, \, \text{supp\,} \pi\subset T} N(\pi).  
$$
and check that the equality $	N_T(5)=\dim_T Q_{7}$ holds. From \emph{Proposition \ref{trapezenka}}  we have
\begin{equation}\label{dim_T Q_7}
	dim_T Q_{7} = \frac{208n-13}{7}{2n+12\choose 13}\ .
\end{equation}
Since $k=5$ (i.e. $k+2=7$)  list of appropriate $\text{supp\,}\pi$ is:
\begin{eqnarray}\label{(X)7}
	(A_s) & for  &  s=2,3,\cdots ,7\nonumber\\
	(B_{s\,\delta}) & for & s=1,2,\cdots ,5\nonumber\\
	(C_{\delta\, s}) & for& s=1,2,\cdots ,5\nonumber\\
	(D_{s \,\delta\, t}) & for &  s,t=1,2,\cdots ,4\ where\ s+t\leq 5\ .
\end{eqnarray}
\noindent
Using the software package \emph{Mathematica}, based on the \emph{Lemmas 3.4.-3.12.}  from the \cite{PS3}, we get list of $\Sigma_T(\mathcal{X})$ for appropriate $\text{supp\,}\pi$ in cases $\mathcal{X} = A,B,C$
\begin{align*}
	\Sigma_T(A_7) & =  \frac{1}{227026800}(32432400 n - 158821380 n^2 + 19338228 n^3 + 
		681539131 n^4 - 675319645 n^5 \nonumber\\
		&- 186651465 n^6 + 327836509 n^7 + 
		7351058 n^8 - 47147100 n^9 - 3323320 n^{10} + 2290288 n^{11}\nonumber\\
		& + 445536 n^{12} + 29120 n^{13} + 640 n^{14})\nonumber\\
	\Sigma_T(A_6)& =  \frac{1}{7484400}(-1247400 n + 6026310 n^2 + 558261 n^3 - 26900753 n^4 + 
		19866495 n^5\nonumber\\
		& + 11764775 n^6 - 8762292 n^7 - 2402004 n^8 + 
		784080 n^9 + 284240 n^{10} + 27456 n^{11} + 832 n^{12})\nonumber\\	
	\Sigma_T(A_5)& =  \frac{1}{113400}(22680 n - 108558 n^2 - 40995 n^3 + 498860 n^4 - 212625 n^5 - 
		271194 n^6 + 52920 n^7\nonumber\\
		& + 50640 n^8 + 7920 n^9 + 352 n^{10}) \nonumber\\
	\Sigma_T(A_4) & =  \frac{1}{840} (-210 n + 1009 n^2 + 798 n^3 - 4613 n^4 + 2296 n^6 + 672 n^7 + 
		48 n^8)\nonumber\\
	\Sigma_T(A_3)& =  \frac{1}{90} (30 n - 151 n^2 - 225 n^3 + 590 n^4 + 420 n^5 + 56 n^6)\nonumber\\	
	\Sigma_T(A_2)& =  \frac{1}{6} (-3 n + 19 n^2 + 60 n^3 + 20 n^4)\nonumber\\
	\Sigma_{T}(B_{5\,|})& =  \frac{1}{97297200}(-1584360 n + 7022574 n^2 + 9580077 n^3 - 39422383 n^4 -	15405533 n^5 + 50132797 n^6\nonumber\\
	 &+ 10672376 n^7 - 19697964 n^8 - 3885024 n^9 + 1905904 n^{10} + 620672 n^{11} + 59072 n^{12} + 1792 n^{13})  \nonumber\\
	\Sigma_T(B_{4\,|})& =  \frac{1}{1247400}(27810 n - 126027 n^2 - 216777 n^3 + 676445 n^4 + 
		528495 n^5 - 731346 n^6 - 453816 n^7\nonumber\\
		& + 163680 n^8 + 113520 n^9 + 	17248 n^{10} + 768 n^{11})\nonumber\\
	\Sigma_T(B_{3\,|})& =  \frac{1}{22680}(-738 n + 3537 n^2 + 7870 n^3 - 16569 n^4 - 23772 n^5 + 8568 n^6 + 
	16320 n^7 + 4464 n^8 + 320 n^9)\nonumber\\
	\Sigma_T(B_{2\,|})& =  \frac{1}{630}(33 n - 182 n^2 - 553 n^3 + 490 n^4 + 1652 n^5 + 952 n^6 + 128 n^7)  \nonumber\\
	\Sigma_T(B_{1\,|})& =  \frac{1}{30}(-3 n + 25 n^2 + 120 n^3 + 140 n^4 + 48 n^5)\nonumber
\end{align*}	
\begin{align*}
	\Sigma_T(B_{5\,||})& =  \frac{1}{340540200}(5964840 n - 26385966 n^2 - 25304643 n^3 + 
		148562232 n^4 - 2430428 n^5 - 191219028 n^6\nonumber\\
		& + 36727691 n^7 + 
		79242306 n^8 - 15699684 n^9 - 10786776 n^{10} + 679952 n^{11} + 
		585312 n^{12} + 62272 n^{13}\nonumber\\
		& + 1920 n^{14})\nonumber\\
	\Sigma_T(B_{4\,||})& =  \frac{1}{3742200}(-84240 n + 373806 n^2 + 453915 n^3 - 2093333 n^4 - 
		508695 n^5 + 2642123 n^6\nonumber\\
		& + 176220 n^7 - 1023924 n^8 - 55440 n^9 + 
		100496 n^{10} + 18240 n^{11} + 832 n^{12}) \nonumber\\
	\Sigma_T(B_{3\,||})& =  \frac{1}{56700}(1710 n - 7713 n^2 - 11965 n^3 + 41795 n^4 + 25095 n^5 - 46914 n^6\nonumber\\
		& -	19560 n^7 + 12480 n^8 + 4720 n^9 + 352 n^{10}) \nonumber\\
	\Sigma_T(B_{2\,||})& =  \frac{1}{1260}(-54 n + 255 n^2 + 518 n^3 - 1239 n^4 - 1456 n^5 + 840 n^6 + 992 n^7 + 	144 n^8)\nonumber\\
	\Sigma_T(B_{1\,||})& =  \frac{1}{45}(3 n - 16 n^2 - 45 n^3 + 50 n^4 + 132 n^5 + 56 n^6) 
\end{align*}	
\begin{align*}
	\Sigma_T(C_{5\,|})& =  \frac{1}{7484400}(-249480 n + 474318 n^2 + 1451817 n^3 - 2658205 n^4 - 
		2085787 n^5 + 3361699 n^6\nonumber\\
		& + 1157266 n^7 - 1302180 n^8 - 316536 n^9 + 120208 n^{10} + 42592 n^{11} + 4160 n^{12} + 128 n^{13})\nonumber\\
	\Sigma_T(C_{4\,|})& =  \frac{1}{113400}(5670 n - 8577 n^2 - 35991 n^3 + 45905 n^4 + 63125 n^5 - 49056 n^6\nonumber\\
	& - 	41748 n^7 + 10320 n^8 + 8880 n^9 + 1408 n^{10} + 64 n^{11}) \nonumber\\
	\Sigma_T(C_{3\,|})& =  \frac{1}{2520}(-210 n + 205 n^2 + 1424 n^3 - 917 n^4 - 2702 n^5 + 280 n^6 + 	1456 n^7 + 432 n^8 + 32 n^9)\nonumber\\
	\Sigma_T(C_{2\,|})& =  \frac{1}{90}(15 n - 2 n^2 - 101 n^3 - 20 n^4 + 160 n^5 + 112 n^6 + 16 n^7)\nonumber\\
	\Sigma_T(C_{1\,|})& =  \frac{1}{6}(-3 n - 5 n^2 + 10 n^3 + 20 n^4 + 8 n^5) \nonumber\\
	\Sigma_T(C_{5\,||})& =  \frac{1}{48648600}(1584360 n - 3704814 n^2 - 7867197 n^3 + 20835295 n^4 + 	5674097 n^5 - 26713401 n^6\nonumber\\
		& + 2189044 n^7 + 10953800 n^8 - 1654224 n^9 - 1446016 n^{10} + 65728 n^{11} + 74880 n^{12} + 8192 n^{13} +	256 n^{14}) \nonumber\\
	\Sigma_T(C_{4\,||})& =  \frac{1}{623700} (-27810 n + 56907 n^2 + 153417 n^3 - 318197 n^4 - 
		185515 n^5 + 399586 n^6\nonumber\\
		& + 70356 n^7 - 152856 n^8 - 13200 n^9 + 14432 n^{10} + 2752 n^{11}+ 128 n^{12})\nonumber\\
	\Sigma_T(C_{3\,||})& =  \frac{1}{11340}(738 n - 1233 n^2 - 4486 n^3 + 6677 n^4 + 7140 n^5 - 7476 n^6 - 
	4224 n^7 + 1968 n^8 + 832 n^9 + 64 n^{10}) \nonumber\\
	\Sigma_T(C_{2\,||})& =  \frac{1}{315}(-33 n + 38 n^2 + 217 n^3 - 182 n^4 - 392 n^5 + 112 n^6 + 208 n^7 + 
	32 n^8) \nonumber\\
	\Sigma_T(C_{1\,||})& =  \frac{1}{15}(3 n - n^2 - 20 n^3 + 32 n^5 + 16 n^6)\nonumber
\end{align*}
The symmetry argument for type $(D_{s \,\delta\, t})$ show that
\begin{equation}\label{D sim}
	\Sigma_T(D_{s \,\delta \, t}) = \Sigma_T(D_{s'\,\delta \, t'}) \ for \ s+t = s'+t'\ . 
\end{equation}
From above equation (\ref{D sim}) and last line in (\ref{(X)7}) we have 
\begin{eqnarray}\label{Isti na listi}	
	N_T(D_{1 \,||\, 4)}= N_T(D_{4 \,||\, 1}) &=&N_T(D_{2 \,||\, 3})=N_T(D_{3 \,||\, 2}),\nonumber\\
	N_T(D_{1 \,|\, 4})= N_T(D_{4 \,|\, 1}) &=&N_T(D_{2 \,|\, 3})=N_T(D_{3 \,|\, 2}),\nonumber\\
	N_T(D_{2 \,||\, 2})= N_T(D_{1 \,||\, 3})&=&N_T(D_{3 \,||\, 1}), \\
	N_T(D_{2 \,|\, 2})= N_T(D_{1 \,|\, 3})&=&N_T(D_{3 \,|\, 1}),\nonumber\\
	N_T(D_{1 \,||\, 2})&= &N_T(D_{2 \,||\, 1}),\nonumber\\
	N_T(D_{1 \,|\, 2})&=& N_T(D_{2 \,|\, 1})\nonumber
\end{eqnarray}
Finally for case $D$ we get 
\begin{align}
	\Sigma_{T}(D_{s|t})\mid_{s+t=5} & =  \frac{1}{194594400} (-475200 n + 8565804 n^2 - 12779832 n^3 - 32208891 n^4 + 	18792202 n^5 + 36296832 n^6\nonumber\\
		& - 3981406 n^7 - 14258673 n^8 - 2074644 n^9 + 1551264 n^{10} + 517088 n^{11} + 53664 n^{12} + 1792 n^{13})\nonumber\\
	\Sigma_{T}(D_{s|t})\mid_{s+t=4} & =  \frac{1}{2494800} (14940 n - 179388 n^2 + 130493 n^3 + 704715 n^4 - 
	7425 n^5 - 688149 n^6\nonumber\\
	& - 230736 n^7 + 147510 n^8 + 91960 n^9 + 	15312 n^{10} + 768 n^{11})	\nonumber\\
	\Sigma_{T}(D_{s|t})\mid_{s+t=3} & =  \frac{1}{22680}(-288 n + 2754 n^2 + 281 n^3 - 10458 n^4 - 6657 n^5 + 5796 n^6 + 	6504 n^7 + 1908 n^8 + 160 n^9)	\nonumber\\
	\Sigma_{T}(D_{1|1}) & =  \frac{1}{2520} (66 n - 567 n^2 - 686 n^3 + 1575 n^4 + 2884 n^5 + 1512 n^6 + 256 n^7)	\nonumber\\
	\Sigma_{T}(D_{s||t})\mid_{s+t=5} & =  \frac{1} {1362160800} (-75600 n - 44699868 n^2 + 121462068 n^3 + 
	141600095 n^4 - 228988760 n^5 \nonumber\\
	& - 155649494 n^6 + 133397264 n^7 + 	71941155 n^8 - 26966940 n^9 - 14142128 n^{10} + 1059968 n^{11}  \nonumber \\
	& + 946400 n^{12} + 112000 n^{13} + 3840 n^{14}) \nonumber\\
	\Sigma_{T}(D_{s||t})\mid_{s+t=4} & =  \frac{1}{3742200} (-8100 n + 201456 n^2 - 363627 n^3 - 724504 n^4 + 
		584925 n^5 + 805123 n^6 \nonumber\\
		& - 211266 n^7 - 321387 n^8 - 9900 n^9 + 38896 n^{10} + 7968 n^{11} + 416 n^{12}) \nonumber\\
	\Sigma_{T}(D_{s||t})\mid_{s+t=3} & =  \frac{1}{56700} (360 n - 5004 n^2 + 4970 n^3 + 19165 n^4 - 3570 n^5 - 19047 n^6 - 3720 n^7 + 4710 n^8 \nonumber\\
	& + 1960 n^9 + 176 n^{10}) \nonumber\\
	\Sigma_{T}(D_{1||1}) & =  \frac{1}{1260} (-18 n + 187 n^2 - 28 n^3 - 707 n^4 - 322 n^5 + 448 n^6 + 368 n^7 + 	72 n^8) \nonumber
\end{align}
\noindent
Using (\ref{N_{T}(X)})  it follows
\begin{eqnarray}\label{N_{T}(X)7}
	N_{T}(A_s) & = & (s-1){k+1\choose s-1}\, \Sigma_T(A_s)\ for\ s=2,3,\cdots ,7\nonumber\\
	&&\nonumber\\
	N_{T}(B_{s\,\delta}) & = & {k-1\choose s-1}\, \Sigma_{T}(B_{s\,\delta})\ for\ s=1,2,\cdots ,5\nonumber\\
	&&\\
	N_{T}(C_{\delta\, s}) & = & {k-1\choose s-1}\,\Sigma_{T}(C_{\delta\, s})\ for\ s=1,2,\cdots ,5\nonumber\\
	&&\nonumber\\
	N_{T}(D_{s \,\delta\, t}) & = & {k-1\choose s+t-1}\,\Sigma_{T}(D_{s \,\delta\, t})\ for\ s,t=1,2,\cdots ,4\ where\ s+t\leq 5\nonumber\ .
\end{eqnarray}
and again using \emph{Mathematica}, from (\ref{N_{T}(X)7}), (\ref{Isti na listi}) and above list of polinomials $\Sigma_{T}(\mathcal{X})$, we have calculation
$$
\begin{array}{lll}
	N_T(5) &=&	\sum_{r=2}^{7}N_T(A_r) + \sum_{r=1}^{5}\sum_{\delta=\,|,\, ||}
	\left(N_T(B_{r\,\delta})+N_T(C_{\delta \,r})\right)+\\
	&&\\ 
	&&+\sum_{\delta=\,|,\, ||}\left(4\times N_T(D_{1 \,\delta\, 4}) +3\times N_T(D_{1 \,\delta\, 3})+2\times N_T(D_{1 \,\delta\, 2})+N_T(D_{1 \,\delta\, 1})\right)\\
	\\
	&=&\frac{n}{26195400}  (-7484400 + 73299060 n + 622498968 n^2 + 
	1754807691 n^3 + 2663728067 n^4 \\
	\\
	&& + 2525792269 n^5 +	1603570969 n^6 + 705990428 n^7 + 218654436 n^8 + 47519472 n^9 \\
	\\
	&& + 7095088 n^{10} + 693056 n^{11} + 39872 n^{12} + 1024 n^{13})\\
	\\
	& = &\frac{16n-1}{2^7\cdot 26195400}2n (2n+1) (2n+2) (2n+3) (2n+4) (2n+5) (2n+6) (2n+7) (2n+8)\\
	\\
	&& \cdot (2n+9)(2n+10) (2n+11)(2n+12)\\
	\\
	& = &\frac{13!}{2^7\cdot 26195400}{2n+12\choose 13}(16n-1)\ .
\end{array} 
$$
\noindent
Since
$$
\frac{13!}{2^7\cdot 26195400} =  \frac{1664}{7\cdot 128} = \frac{13}{7}\ ,
$$
finally we have
\begin{equation}\label{N_T(5)}
N_T(5) = \frac{208n-13}{7}\binom{2n+12}{13}\ .
\end{equation}
Now from (\ref{dim_T Q_7}) and (\ref{N_T(5)}) the statement of theorem follows.
\end{proof}


\section{Example 2 - Combinatorial relations among relations of standard $C_3^{(1)}$-modules for all levels}
\noindent
In the previous section, the list of possible supports was determined with a limit of $7=k+2$. In this case the upper limit is again $7$, but $7= 2n+1$ for that reason those two list is almost the same. Furthermore in the case  $[\bar{B}_{<0}]_1^7$ (i.e. $n=3$) we can state the following proposition.
\begin{proposition}\label{L: klasifikacija dva ulaganja}
	Let $\ell(\pi)=k+2$ and let trapezoid $T\subset [\bar{B}_{<0}]_1^{7}$ oriented as in Figure 4. Assume that $\pi$ allows two embeddings of leading terms of relations for level $k$ standard modules. Then $\text{supp\,}\pi $ is one of the following types :
\begin{equation}
	\begin{array}{rcll}
		(A_{s}) &for& s=2,3,4,5,6,7 & \\
		(B_{s\,|}) &for& s=1,2,3,4,5,6& \\
		(B_{s\,||}) &for& s=1,2,3,4,5& \\
		(C_{|\, s}) &for& s=1,2,3,4,5,6& \\
		(C_{||\, s}) &for& s=1,2,3,4,5&  \\
		(D_{s\, |\, t}) &for& s,t=1,2,3,4,5& where\ s+t\leq 6\\
		(D_{s\, ||\, t}) &for& s,t=1,2,3,4 &where\ s+t\leq 5 .
	\end{array}
\end{equation}
For the above listed types and for an arbitrary $k$  we have respectively
\begin{equation}\label{N_T_3}
	\begin{array}{ll}
	N_T(A_7) = 6\cdot 384\cdot \binom{k+1}{6}& N_T(A_6) = 5\cdot 1856\cdot\binom{k+1}{5} \\
	N_T(A_5) = 4\cdot 3648\cdot\binom{k+1}{4}&N_T(A_4) = 3\cdot 3708\cdot \binom{k+1}{3}\\
	 N_T(A_3) = 2\cdot 2037\cdot \binom{k+1}{2}& N_T(A_2) = 567\cdot \binom{k+1}{1}\\
	&\\
	N_T(B_{6\,|})=192\cdot\binom{k-1}{5}&\\
	N_T(B_{5\,|})=1072\cdot\binom{k-1}{4}&	N_T(B_{5\,||})=192\cdot\binom{k-1}{4}\\
	N_T(B_{4\,|})= 2496\cdot\binom{k-1}{3}& N_T(B_{4\,||})=1024\cdot\binom{k-1}{3}\\
	N_T(B_{3\,|})=3115\cdot\binom{k-1}{2}&	N_T(B_{3\,||})=2252\cdot\binom{k-1}{2}\\
	N_T(B_{2\,|})=2220 \cdot\binom{k-1}{1}& N_T(B_{2\,||})=2610\cdot\binom{k-1}{1}\\
	N_T(B_{1\,|})=882\cdot\binom{k-1}{0}&	N_T(B_{1\,||})=1680\cdot\binom{k-1}{0}\\
	&\\
	N_T(C_{6\,|})=160\cdot\binom{k-1}{5}&	\\
	N_T(C_{5\,|})=880\cdot\binom{k-1}{4}&	N_T(C_{5\,||})=160\cdot\binom{k-1}{4}\\
	N_T(C_{4\,|})= 2010\cdot\binom{k-1}{3}& N_T(C_{4\,||})=840\cdot\binom{k-1}{3}\\
	N_T(C_{3\,|})=2445\cdot\binom{k-1}{2}&	N_T(C_{3\,||})=1810\cdot\binom{k-1}{2}\\
	N_T(C_{2\,|})= 1680\cdot\binom{k-1}{1}& N_T(C_{2\,||})=2040\cdot\binom{k-1}{1}\\
	N_T(C_{1\,|})=630\cdot\binom{k-1}{0}&	N_T(C_{1\,||})=1260\cdot\binom{k-1}{0}\\
	&\\
	N_T(D_{1 \,|\, 5})= 96\cdot\binom{k-1}{5}&\\
	N_T(D_{5 \,|\, 1})= 96\cdot\binom{k-1}{5}&\\
	N_T(D_{2 \,|\, 4})= 96\cdot\binom{k-1}{5}&\\
	N_T(D_{4 \,|\, 2})= 96\cdot\binom{k-1}{5}&\\
	N_T(D_{3 \,|\, 3})= 96\cdot\binom{k-1}{5}&\\
	N_T(D_{1 \,|\, 4})= 512\cdot\binom{k-1}{4}&N_T(D_{1 \,||\, 4})=96\cdot\binom{k-1}{4}\\
	N_T(D_{4 \,|\, 1})= 512\cdot\binom{k-1}{4}&N_T(D_{4 \,||\, 1})=96\cdot\binom{k-1}{4}\\
	N_T(D_{2 \,|\, 3})= 512\cdot\binom{k-1}{4}&N_T(D_{2 \,||\, 3})=96\cdot\binom{k-1}{4}\\
	N_T(D_{3 \,|\, 2})= 512\cdot\binom{k-1}{4}&N_T(D_{3 \,||\, 2})=96\cdot\binom{k-1}{4}\\
	N_T(D_{2 \,|\, 2})= 1150\cdot\binom{k-1}{3}&N_T(D_{2 \,||\, 2})=488\cdot\binom{k-1}{3}\\
	N_T(D_{1 \,|\, 3})= 1150\cdot\binom{k-1}{3}&N_T(D_{1 \,||\, 3})=488\cdot\binom{k-1}{3}\\
	N_T(D_{3 \,|\, 1})= 1150\cdot\binom{k-1}{3}&N_T(D_{3 \,||\, 1})=488\cdot\binom{k-1}{3}\\
	N_T(D_{1 \,|\, 2})= 1397\cdot\binom{k-1}{2}&N_T(D_{1 \,||\, 2})=1034\cdot\binom{k-1}{2}\\
	N_T(D_{2 \,|\, 1})= 1397\cdot\binom{k-1}{2}&N_T(D_{2 \,||\, 1})=1034\cdot\binom{k-1}{2}\\
	N_T(D_{1 \,|\, 1})= 979\cdot\binom{k-1}{1}&N_T(D_{1 \,||\, 1})=1166\cdot\binom{k-1}{1}
	\end{array}
\end{equation}
\end{proposition}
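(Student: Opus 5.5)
The proof has two parts: the classification of supports, and the evaluation of $N_T(\mathcal X)$ for each type.

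\emph{Classification.} Start from the general classification quoted from \emph{Lemma 3.1} of \cite{PS3}: if $\ell(\pi)=k+2$ and $\pi$ admits two embeddings of leading terms, then $\text{supp\,}\pi$ is of type $A$, $B$, $C$ or $D$. For $n=3$ the array $[\bar B_{<0}]_1^{7}$ has exactly $7$ rows. Any downward zig-zag chain $a_1\vartriangleright\dots\vartriangleright a_r$ lies in strictly decreasing rows, so $r\le 7$. This gives:
\begin{itemize}
\item $A_s$: $s\le 7$.
\item $B_{s\,\delta}$ and $C_{\delta\,s}$: the chain $a_1\vartriangleright\dots\vartriangleright a_s\vartriangleright b$ occupies $s+1$ distinct rows. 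If $\delta=|$, then $b,c$ share a row, so $s+1\le 7$, i.e.\ $s\le 6$. If $\delta=||$, then $b,c$ occupy two different rows below $a_s$, so $s+2\le 7$, i.e.\ $s\le 5$.
\item $D_{s\,\delta\,t}$: the analogous count gives $s+t+1\le 7$ for $|$ and $s+t+2\le 7$ for $||$.
\end{itemize}
I must still check that every listed type actually occurs inside a trapezoid $T$. It is enough to exhibit one configuration for each bound, for instance chains running down a single diagonal of the middle triangle.

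\emph{Counting formulas.} For each type I use (\ref{N_{T}(X)}), i.e.\ \emph{Lemmas 3.2} and \emph{3.3} of \cite{PS3}:
\begin{itemize}
\item $N_T(A_s)=(s-1)\binom{k+1}{s-1}\Sigma_T(A_s)$;
\item $N_T(B_{s\,\delta})$ and $N_T(C_{\delta\,s})$ carry the factor $\binom{k-1}{s-1}$;
\item $N_T(D_{s\,\delta\,t})$ carries the factor $\binom{k-1}{s+t-1}$.
\end{itemize}
These binomial factors are exactly those in the statement, and they depend on $k$ only. The whole claim therefore reduces to computing the pure numbers $\Sigma_T(\mathcal X)$ for $n=3$: $\Sigma_T(A_7)=384,\ \Sigma_T(A_6)=1856,\dots,\Sigma_T(D_{1\,||\,1})=1166$. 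These count subsets of a fixed finite trapezoid and do not depend on $k$.

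\emph{Evaluating the $\Sigma_T$.} For types $A,B,C$ I would substitute $n=3$ into the polynomial formulas for $\Sigma_T(\mathcal X)$ derived from \emph{Lemmas 3.4--3.12} of \cite{PS3}. These are the same polynomials listed in Section 4, since the support counts $\Sigma_T$ do not depend on $k$. For example, $\Sigma_T(A_2)$ at $n=3$ gives $(-9+171+1620+1620)/6=567$, as required.

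That substitution does not cover the types absent from the $k=5$ list. These are $B_{6\,|}$, $C_{6\,|}$ and $D_{s\,|\,t}$ with $s+t=6$; their supports have $7$ or $8$ elements, beyond the range $k+2=7$ used there. They must be counted directly in the $7$-row trapezoid: a length-$7$ chain must visit every row, which reduces the count to chains with forced endpoints in the top and bottom rows. For the $D$-types I use the symmetry (\ref{D sim}), so only one representative per value of $s+t$ needs computing. As a sanity check, the $|$-values $96,512,1150,1397$ match the pattern of the $D_{s|t}$ polynomials at $n=3$, e.g.\ $\Sigma_T(D_{1|1})(3)=979$.

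\emph{Main obstacle.} The hard step is the direct enumeration of the maximal configurations ($s=6$, or $s+t=6$), where the polynomials from \cite{PS3} no longer apply. I would do this by a transfer-matrix count. Encode a downward zig-zag step as a move from row $i$ to row $i-1$ with admissible column shifts determined by (\ref{partial order}), including the wrap between triangles of $T$. Count chains by multiplying these row-to-row adjacency matrices, attach the incomparable pair $b,c$ as a branching in one row, and verify the totals by computer, as is done elsewhere in the paper with \emph{Mathematica}.
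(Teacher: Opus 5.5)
Your proposal is correct and follows the paper's proof. The steps are the same: the row-count classification (the paper only notes that $\delta=|$ newly allows $B_{6\,|}$, $C_{|\,6}$ and $D_{s\,|\,t}$ with $s+t=6$), then the reduction to the numbers $\Sigma_T(\mathcal X)$ via Lemmas 3.2--3.3 of \cite{PS3}, then a machine evaluation at $n=3$. The only difference is the source of the constants. The paper runs Lemmas 3.4--3.12 of \cite{PS3} directly at $n=3$, whereas you reuse the Section 4 polynomials, which do give the listed values (e.g.\ $384$, $1856$, $567$, $979$).

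The step you call the main obstacle is in fact elementary, and needs no transfer matrix. For $n=3$, every downward $\pm1$ path from the $6$ top-row positions stays inside $T$, which gives the following counts.
\begin{itemize}
\item $\Sigma_T(B_{6\,|})=6\cdot 2^5=192$.
\item $\Sigma_T(C_{|\,6})=5\cdot 2^5=160$, since only $5$ of the $7$ positions in row $6$ have both upper neighbours in $T$.
\item $\Sigma_T(D_{s\,|\,t})=6\cdot 2^4=96$ for $s+t=6$, since $d_1$ is forced as the common lower neighbour of $b$ and $c$.
\end{itemize}
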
 
\begin{proof}
			
	Since the array $[\bar{B}_{<0}]_1^{7}$ consists of $7$ rows $\pi$ cannot be composed of more than seven elements of the base $B$ in the case of type $A_s$. But in the other three cases there are a differences depending on what the $\delta$ is. If $\delta =|$ then types $B_{6\,|}$, $C_{6\,|}$ and $D_{s|t}$ (where $s+t=6$)  are possible. All other support types, regardless of the choice of $\delta$, are the same as in list of previous \emph{Theorem \ref{the main theorem 1}}. i.e. $|\text{supp}\, \pi|\leq 7$. Thus the first statement is proven. The second statement follows from \emph{Lemmas 3.4 - 3.12} from \cite{PS3}. All calculation are made by \emph{Mathematica} for $n=3$.\\
	Therefore, the equalities (\ref{N_T_3}) follows from the list below.
\begin{equation}\label{Sigma_T_3}
	\begin{array}{ll}
		\Sigma_T(A_7) =  384& \Sigma_T(A_6) =  1856 \\
		\Sigma_T(A_5) =  3648&\Sigma_T(A_4) = 3708\\
		\Sigma_T(A_3) =  2037\cdot & \Sigma_T(A_2) = 567\\
		&\\
		\Sigma_T(B_{6\,|})=192&\\
		\Sigma_T(B_{5\,|})=1072&	\Sigma_T(B_{5\,||})=192\\
		\Sigma_T(B_{4\,|})= 2496& \Sigma_T(B_{4\,||})=1024\\
		\Sigma_T(B_{3\,|})=3115&	\Sigma_T(B_{3\,||})=2252\\
		\Sigma_T(B_{2\,|})=2220 & \Sigma_T(B_{2\,||})=2610\\
		\Sigma_T(B_{1\,|})=882&	\Sigma_T(B_{1\,||})=1680\\
		&\\
		\Sigma_T(C_{6\,|})=160&	\\
		\Sigma_T(C_{5\,|})=880&	\Sigma_T(C_{5\,||})=160\\
		\Sigma_T(C_{4\,|})= 2010& \Sigma_T(C_{4\,||})=840\\
		\Sigma_T(C_{3\,|})=2445&	\Sigma_T(C_{3\,||})=1810\\
		\Sigma_T(C_{2\,|})= 1680& \Sigma_T(C_{2\,||})=2040\\
		\Sigma_T(C_{1\,|})=630&	\Sigma_T(C_{1\,||})=1260\\
		&\\
		\Sigma_T(D_{1 \,|\, 5})= 96&\\
		\Sigma_T(D_{5 \,|\, 1})= 96&\\
		\Sigma_T(D_{2 \,|\, 4})= 96&\\
		\Sigma_T(D_{4 \,|\, 2})= 96&\\
		\Sigma_T(D_{3 \,|\, 3})= 96&\\
		\Sigma_T(D_{1 \,|\, 4})= 512&\Sigma_T(D_{1 \,||\, 4})=96\\
		\Sigma_T(D_{4 \,|\, 1})= 512&\Sigma_T(D_{4 \,||\, 1})=96\\
		\Sigma_T(D_{2 \,|\, 3})= 512&\Sigma_T(D_{2 \,||\, 3})=96\\
		\Sigma_T(D_{3 \,|\, 2})= 512&\Sigma_T(D_{3 \,||\, 2})=96\\
		\Sigma_T(D_{2 \,|\, 2})= 1150&\Sigma_T(D_{2 \,||\, 2})=488\\
		\Sigma_T(D_{1 \,|\, 3})= 1150&\Sigma_T(D_{1 \,||\, 3})=488\\
		\Sigma_T(D_{3 \,|\, 1})= 1150&\Sigma_T(D_{3 \,||\, 1})=488\\
		\Sigma_T(D_{1 \,|\, 2})= 1397&\Sigma_T(D_{1 \,||\, 2})=1034\\
		\Sigma_T(D_{2 \,|\, 1})= 1397&\Sigma_T(D_{2 \,||\, 1})=1034\\
		\Sigma_T(D_{1 \,|\, 1})= 979&\Sigma_T(D_{1 \,||\, 1})=1166
	\end{array}
\end{equation}  	
\end{proof}	
\noindent
Finally, the main result of this section is the following theorem:
\begin{theorem}\label{the main theorem 2}
	For any two embeddings
	$\rho_1 \subset \pi$ and $\rho_2 \subset \pi$ in $\pi\in\mathcal P^{k+2}(m)$,
	where $\rho_1, \rho_2 \in\ell \!\text{{\it t\,}}(\bar{R})$, we
	have a level $k$ relation for $C_3^{(1)}$
	\begin{equation}\label{E:9.2-C3}
		u(\rho_1 \subset \pi) - u(\rho_2 \subset \pi) 
		=\sum_{\pi\prec \pi', \ \rho\subset\pi'}c_{\rho\subset\pi'}\,u(\rho\subset\pi').
	\end{equation}
\end{theorem}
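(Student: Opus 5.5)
The plan is to follow the same scheme as in the proof of Theorem~\ref{the main theorem 1}. By Theorem 9.2 of \cite{PS1} it suffices to show that equality holds in (\ref{E:9.1a}). As explained before Proposition~\ref{trapezenka}, this reduces to proving the trapezoid identity (\ref{E:9.1T}), $N_T(k)=\dim_T Q_{k+2}$, now for $n=3$ and every level $k\geq 1$. Once this identity is established, the existence of the relation (\ref{E:9.2-C3}) for any two embeddings $\rho_1\subset\pi$, $\rho_2\subset\pi$ with $\pi\in\mathcal P^{k+2}(m)$ follows exactly as in \cite{PS3} and \cite{S}.

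First I compute the right-hand side. Put $n=3$ in Proposition~\ref{trapezenka}:
\[
\dim_T Q_{k+2}=\frac{6(k+2)(2k+5)-(2k+9)}{k+2}\binom{2k+8}{5}.
\]
Since $\binom{2k+8}{5}$ contains the factor $2k+4=2(k+2)$, the denominator cancels. The result is a polynomial in $k$ of degree $6$ with rational coefficients.

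Next I compute the left-hand side. By Proposition~\ref{L: klasifikacija dva ulaganja}, $N_T(k)$ is the sum of all the terms listed in (\ref{N_T_3}). Every contribution is a constant times $\binom{k+1}{j}$ or $\binom{k-1}{j}$ with $j\leq 6$, so $N_T(k)$ is also a polynomial in $k$ of degree at most $6$. The leading contributions come from $N_T(A_7)$ and the five $D_{s|t}$ terms with $s+t=6$, together with $B_{6\,|}$ and $C_{6\,|}$. I group terms with equal binomials, using the coincidences already visible in the list (the equal values of the $D$-terms with the same $s+t$, as in (\ref{D sim})). This gives
\[
N_T(k)=\sum_{j=0}^{6}\Bigl(\alpha_j\binom{k+1}{j}+\beta_j\binom{k-1}{j}\Bigr)
\]
with explicit integer coefficients $\alpha_j,\beta_j$.

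Since both sides are polynomials of degree at most $6$ in $k$, it is enough to check equality at seven distinct values, say $k=1,\dots,7$. Alternatively, one can expand both sides symbolically, as was done with \emph{Mathematica} in Section~4, and compare coefficients. A quick sanity check is the leading coefficient: from $6\cdot384/6!+(192+160+5\cdot96)/5!$ on the left, against $12\cdot 2^5/5!$ coming from the right-hand side. One point needs care for small $k$. Types whose binomial coefficient vanishes, for example $\binom{k-1}{5}=0$ for $k\leq 5$, genuinely do not occur, because a zig-zag line has at most $k+1$ points. The polynomial identity therefore correctly computes $N_T(k)$ for every $k\geq1$, and no separate low-level analysis is needed.

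The main obstacle is not the final polynomial comparison, which is routine. It is the reliability of the input data (\ref{Sigma_T_3}), i.e.\ the exact counts $\Sigma_T(\mathcal X)$ of supports of each type in the trapezoid of $[\bar B_{<0}]_1^{7}$. An error in even one of them would break the identity. If the numerical check fails, I would first recount the types that are new for $n=3$ compared with Section~4, namely $B_{6\,|}$, $C_{6\,|}$ and $D_{s|t}$ with $s+t=6$, directly from the geometry of the array. I would also cross-check the remaining counts against the $k=5$ polynomials of Section~4 evaluated at $n=3$, which should agree for the types common to both lists.
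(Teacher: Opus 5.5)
Your proposal is correct and follows the paper's route: reduce via Theorem 9.2 of \cite{PS1} to the trapezoid identity (\ref{E:9.1T}) for $n=3$, note that both sides are polynomials of degree $6$ in $k$, and compare them at seven values of $k$.

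The one real difference is the choice of interpolation points. The paper takes $k=1,2$ from \cite{PS1} and \cite{PS3}, and evaluates (\ref{N_T_3}) at $k=8,\dots,12$, where the list of support types no longer changes. You instead evaluate (\ref{N_T_3}) directly at $k=1,\dots,7$. You justify this by observing that a vanishing $\binom{k+1}{s-1}$ or $\binom{k-1}{j}$ corresponds exactly to a type whose support would exceed $\ell(\pi)=k+2$. That observation is what makes $N_T(k)$ a single polynomial for all $k\ge 1$. The paper's interpolation needs the same fact but leaves it tacit, so your version is slightly more self-contained.

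The check does go through. Both sides take the values $9660,\ 40194,\ 126126,\ 328328,\ 747864,\ 1540710,\ 2934778$ at $k=1,\dots,7$.

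One slip needs fixing. In your leading-coefficient sanity check, the terms $B_{6\,|}$, $C_{6\,|}$ and $D_{s|t}$ with $s+t=6$ carry $\binom{k-1}{5}$, which has degree $5$ in $k$. So they do not contribute to $k^6$. Only $N_T(A_7)=2304\binom{k+1}{6}$ does, with coefficient $2304/6!=16/5$, and this already equals $12\cdot 2^5/5!=16/5$. Your sum $16/5+832/5!$ would make the check fail for no real reason. For the same reason, drop those terms from your description of the "leading contributions".
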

\begin{proof}
In order to verify this theorem  it is enough to prove that in (\ref{E:9.1a}) the equality holds.
	As we say before, it is  much easier to count the number $N(\pi)$ for all $\pi$ with support in a trapezoid $T$ of three successive triangles and check the equation (\ref{E:9.1T}).\\
	From (\ref{N_T_3}) it is obvious  that  
$$	
	N_T(k)=\sum_{\pi, \, \ell(\pi)=k+2, \, \text{supp\,} \pi\subset T} N(\pi) 
$$	
is a polynomial of degree $6$.
On the other side, from \emph{Proposition \ref{trapezenka}} we have
\begin{eqnarray}\label{dim Q_{n=3}}
		\dim_T Q_{k+2} &=& \frac{1}{5}(24k^2+104k+102)\binom{2k+8}{4}\\
		=&& \frac{1}{15}(21420 + 49023 k + 45557 k^2 + 22100 k^3 + 5920 k^4 + 832 k^5 + 48 k^6)\nonumber
\end{eqnarray}
i.e. the $\dim_T Q_{k+2}$ is also a polynomial of degree $6$. 
	\\
	Since both sides of the inequality (\ref{E:9.1T}) are polynomials of  degree $6$ it is sufficient to check that the equality holds  for  $7$ different values of $k$.\\	
	In \cite{PS1} the theorem is proved for basic modules i.e. in the case $k=1$. Moroever in the \cite{PS3}	the above theorem statement is proved for $k=2$. It is important to emphasize that both of the above results are proved for arbitrary $n$, i.e. for every symplectic affine Lie algebra $C_n^{(1)}$.\\
	Finally, to finish the proof of theorem it is enough to check equality in (\ref{E:9.1T})  for extra $5$ values of $k$.   For the uniformity of calculation, we will choose $k$ to be greater than $8$. When $k$ is greater than $8$, then the lists of support types are the same. Therefore, it stands 
	$$
		\begin{array}{lll}
			N_T(k) &=&	\sum_{r=2}^{7}N_T(A_r) + N_T(B_{6\,|}) +N_T(C_{6\,|})+
			\sum_{r=1}^{5}\sum_{\delta=\,|,\, ||}
			\left(N_T(B_{r\,\delta})+N_T(C_{\delta \,r})\right)+\\
			&&\\ 
			&&+N_T(D_{1 \,|\, 5})+N_T(D_{5 \,|\, 1})+N_T(D_{2 \,|\, 4})+N_T(D_{4 \,|\, 2})+N_T(D_{3 \,|\, 3})+\\
			\\
			&&+\sum_{\delta=\,|,\, ||}\left(4\times N_T(D_{1 \,\delta\, 4}) +3\times N_T(D_{1 \,\delta\, 3})+2\times N_T(D_{1 \,\delta\, 2})+ N_T(D_{1 \,\delta\, 1})\right)\\
		\end{array} 
	$$
for $k\geq 8$.  
Again we use \emph{Mathematica} for calculation and the calculation gives the following sequence	
	\begin{eqnarray}\label{N_{T}(3,4,5)}
		N_{T}(8) & = & 5\,249\,244\nonumber\\
		N_{T}(9) & = & 8\,916\,180\nonumber\\
		N_{T}(10) & = & 14\,504\,490\\
		N_{T}(11) & = & 22\,746\,150\nonumber\\
		N_{T}(12) & = & 34\,564\,752\nonumber\ .  
	\end{eqnarray}
From (\ref{dim Q_{n=3}}) for the right hand side of the equality (\ref{E:9.1T})  we have
	\begin{eqnarray}\label{tri sume}
		\dim_T Q_{10}(m) &=& 5\,249\,244\nonumber\\ 
		\dim_T Q_{11}(m) &=& 8\,916\,180\nonumber\\ 
		\dim_T Q_{12}(m) &=& 14\,504\,490\\
		\dim_T Q_{13}(m) &=& 22\,746\,150\nonumber\\
		\dim_T Q_{14}(m) &=& 34\,564\,752\nonumber
	\end{eqnarray}
Now, from (\ref{N_{T}(3,4,5)}) and (\ref{tri sume}) it is obvious that equality holds in  (\ref{E:9.1a})  i.e. the theorem is proven.
\end{proof}

\section*{Acknowledgement}
\noindent
This work is partially supported by the project  “Implementation of
cutting-edge research and its application as part of the Scientific
Center of Excellence for Quantum and Complex Systems, and
Representations of Lie Algebras“; Grant No. PK.1.1.10.0004,
co-financed by the European Union through the European Regional
Development Fund - Competitiveness and Cohesion Programme 2021-2027 and by the Croatian Science Foundation under the project IP-2022-10-9006.\\
The author of this paper is grateful to Mirko Primc for support and many stimulating discussions. 



\end{document}